\renewcommand*{\vec}[1]{\mathbf{#1}}
\newcommand*{\mat}[1]{\mathbf{#1}}
\newcommand*{\RSet}{\mathbb{R}}
\newcommand*{\defeq}{\mathrel{\mathop:}=}
\newcommand*{\SQ}{\mathcal{S}}
\newcommand*{\tp}{T}
\DeclareMathOperator*{\rank}{rank}
\DeclareMathOperator*{\dist}{dist}
\newtheorem{theorem}{Theorem}
\newtheorem{lemma}[theorem]{Lemma}
\title{Contributions to Four-Position Theory\\with Relative Rotations}
\author{Hans-Peter Schröcker\thanks{Hans-Peter Schröcker, Unit Geometry and CAD, University
  Innsbruck, Technikerstraße~13, A6020~Innsbruck, Austria,
  \url{http://geometrie.uibk.ac.at/schroecker/}}}
\date{\today}
\begin{document}

\maketitle
\begin{abstract}% flatex input: [rotquad/abstract.tex]
We consider the geometry of four spatial displacements, arranged in
cyclic order, such that the relative motion between neighbouring
displacements is a pure rotation. We compute the locus of points whose
homologous images lie on a circle, the locus of oriented planes whose
homologous images are tangent to a cone of revolution, and the locus
of oriented lines whose homologous images form a skew quadrilateral on
a hyperboloid of revolution.

%%% Local Variables: 
%%% mode: latex
%%% eval: (ispell-change-dictionary "english")
%%% eval: (flyspell-mode t)
%%% TeX-master: "../rotquad"
%%% End: 

% flatex input end: [rotquad/abstract.tex]
\end{abstract}

{\noindent\small
Keywords: Four-positions theory, rotation quadrilateral, homologous
points, homologous planes, homologous lines, Study quadric.\\
MSC 2010: 70B10}

% flatex input: [rotquad/body.tex]
\section{Introduction}
\label{sec:introduction}

Four-positions theory studies the geometry of four proper Euclidean
displacements, see
\cite[Chapter~5,~\S8--9]{bottema90:_theoretical_kinematics}. Often, it
is considered as a discretization of differential properties of order
three of a smooth one-parameter motion. In this article we take a
different point of view and regard the four displacements as an
elementary cell in a quadrilateral net of positions, that is, the
discretization of a two- or more-parameter motion. Motivated by
applications in the rising field of discrete differential geometry
\cite{bobenko08:_discrete_differential_geometry} we intend to study
quadrilateral nets of positions such that the relative displacement
between neighbouring positions is a pure rotation. The present text
shall provide foundations for this project. Thus, the questions we are
interested in are induced from discrete differential
geometry. Nonetheless, our contributions seem to be of a certain
interest in their own right.

Denote the four displacements by $\alpha_0$, $\alpha_1$, $\alpha_2$,
$\alpha_3$ and the relative displacements of consecutive positions by
$\tau_{i,i+1} = \alpha_{i+1} \circ \alpha_i^{-1}$ (indices modulo
four). We require that every relative displacement $\tau_{i,i+1}$ is a
\emph{pure rotation.} In this case we call the quadruple $(\alpha_0,
\alpha_1, \alpha_2, \alpha_3)$ a \emph{rotation quadrangle.}  Rotation
quadrangles with additional properties (the displacements $\alpha_2
\circ \alpha_0^{-1}$ and $\alpha_3 \circ \alpha_1^{-1}$ are also
rotations) have been studied in
\cite[Chapter~5,~\S~9]{bottema90:_theoretical_kinematics}.  Our
contribution is more general and we provide answers to questions not
asked in \cite{bottema90:_theoretical_kinematics}.

In Section~\ref{sec:construction-rotation-quadrilaterals} we address
the problem of constructing rotation quadrilaterals. Study's kinematic
mapping turns out to be a versatile tools for this task. Subsequently,
we study points $X$ whose homologous images $X_i = \alpha_i(X)$ lie on
a circle (Section~\ref{sec:homologous-points};
Figure~\ref{fig:homologous-points-planes-lines}, left), oriented
planes $\varepsilon$ whose homologous images $\varepsilon_i =
\alpha_i(\varepsilon)$ are tangent to a cone of revolution
(Section~\ref{sec:homologous-planes};
Figure~\ref{fig:homologous-points-planes-lines}, center), and oriented
lines $\ell$ whose homologous images $\ell_i = \alpha_i(\ell)$ form a
skew quadrilateral on a hyperboloid of revolution
(Section~\ref{sec:homologous-lines};
Figure~\ref{fig:homologous-points-planes-lines}, right).

\begin{figure}
  \centering
  \includegraphics{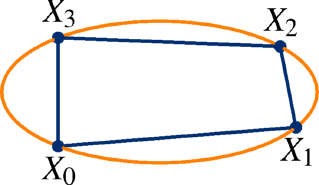}\quad
  \includegraphics{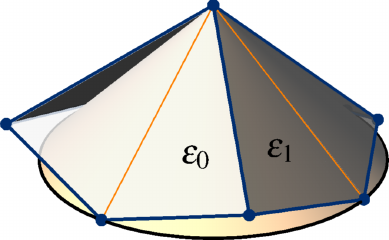}\quad
  \includegraphics{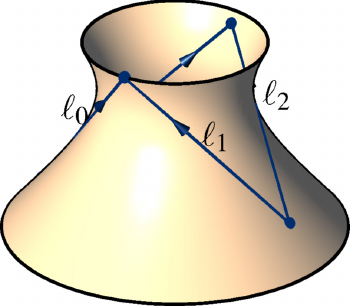}
  \caption{Homologous points on a circle, homologous planes tangent to
    a cone of revolution, and homologous lines on a hyperboloid of
    revolution}
  \label{fig:homologous-points-planes-lines}
\end{figure}

\section{Construction of rotation quadrilaterals}
\label{sec:construction-rotation-quadrilaterals}

One position in a rotation quadrilateral, say $\alpha_0$, can be
chosen arbitrarily without affecting the quadrilateral's geometry. The
remaining positions can be constructed inductively by suitable
rotations but the final position $\alpha_3$ must be such that both
$\tau_{23}$ and $\tau_{30}$ are rotations. This is certainly possible,
albeit a little tedious, in Euclidean three space. The quadric model
of Euclidean displacements, obtained by Study's kinematic mapping
\cite{husty09:_algebraic_geometry_kinematics,selig05:_fundamentals_robotics}
seems to be a more appropriate setting for this task.  It transforms
two positions corresponding in a relative rotation to conjugate points
on the Study quadric $\SQ$. More precisely, the composition of all
rotations about a fixed axis with a fixed displacement is a straight
line in the Study quadric. Thus, a rotation quadrilateral corresponds,
via Study's kinematic mapping, to a skew quadrilateral $Q$ in the
Study quadric. We require here and in the following that $Q$ spans a
three-space which is not completely contained in the Study quadric
$\SQ$. This genericity assumption is not fulfilled in the special
configuration studied in
\cite[Chapter~5,~\S~9]{bottema90:_theoretical_kinematics}.

The quadrilateral $Q$ is uniquely determined by three suitable chosen
vertices $A_0$, $A_1$, $A_2$ and the connecting line of $A_2$ and the
missing vertex $A_3$. Alternatively, $Q$ is determined by two opposite
vertices, say $A_0$ and $A_2$, and two opposite edges. In any case the
input data defines a three-space whose intersection with the Study
quadric is a hyperboloid $H$ containing $Q$. The completion of the
missing data is easy. Denoting the relative revolute axes in the
moving space by $r_{i,i+1}$ the translation of these considerations
into the language of kinematics reads:

\begin{theorem}
  \label{th:rotation-quadrangle-1}
  A rotation quadrangle is uniquely determined by
  \begin{enumerate}
  \item a fixed displacement $\alpha_i$, three revolute axis
    $r_{i,i+1}$, $r_{i+1,i+2}$, $r_{i+2,i+3}$, and two rotation
    angles $\omega_{i,i+1}$ and~$\omega_{i+1,i+2}$ (indices modulo
    four) or
  \item by two displacement $\alpha_i$, $\alpha_{i+2}$ and two
    relative revolute axes $r_{i,i+1}$, $r_{i+2,i+3}$ (indices modulo
    four).
  \end{enumerate}
\end{theorem}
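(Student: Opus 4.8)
The plan is to carry the whole argument into the quadric model, which is exactly what the discussion preceding the statement is meant to set up. Write $A^{\perp}$ for the polar hyperplane of a point $A$ with respect to the Study quadric $\SQ$. Beyond the two facts already recorded above --- two displacements are in a relative rotation exactly when their image points are conjugate with respect to $\SQ$, and the set of all rotations about a fixed axis, composed with a fixed displacement $\delta$, maps onto a line of $\SQ$ through the image point of $\delta$ and parametrised by the rotation angle --- I would need only two elementary facts about the non-degenerate quadric $\SQ$: the line joining two conjugate points of $\SQ$ lies entirely on $\SQ$; and a line is either contained in a polar hyperplane $A^{\perp}$ or meets it in exactly one point, the former occurring precisely when some, equivalently every, point of the line is conjugate to $A$. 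Under these correspondences a rotation quadrangle becomes a quadrilateral $Q = A_0A_1A_2A_3$ on $\SQ$ each of whose edges $A_iA_{i+1}$ is a line on $\SQ$, and conversely; the genericity assumption that $Q$ spans a three-space not contained in $\SQ$ forces, in particular, that $A_i$ and $A_{i+2}$ are non-conjugate, i.e.\ that the composite $\alpha_{i+2}\circ\alpha_i^{-1}$ is not a rotation, which is precisely the property that fails in the special configuration of \cite[Chapter~5,~\S~9]{bottema90:_theoretical_kinematics}.

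\emph{Part (1).} By the cyclic symmetry I would take $i = 0$. The displacement $\alpha_0$ fixes the point $A_0\in\SQ$; composing $\alpha_0$ with the rotation about $r_{01}$ through $\omega_{01}$ yields $\alpha_1$, hence a well-defined point $A_1$ on the line of $\SQ$ through $A_0$ associated with $r_{01}$; in the same way $r_{12}$ and $\omega_{12}$ produce $\alpha_2$ and a point $A_2\in\SQ$. The axis $r_{23}$, with no angle attached, determines the line $g$ of $\SQ$ through $A_2$ on which the remaining vertex $A_3$ must lie. The final requirement, that $\tau_{30}=\alpha_0\circ\alpha_3^{-1}$ be a rotation, says that $A_3$ is conjugate to $A_0$, so $A_3\in g\cap A_0^{\perp}$. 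Genericity makes $A_2$, which lies on $g$, non-conjugate to $A_0$; hence $g\not\subset A_0^{\perp}$ and $g\cap A_0^{\perp}$ is a single point, so $A_3$, and with it $\alpha_3$, is determined. All four vertices lie on $\SQ$ and consecutive ones are conjugate, so each edge $A_iA_{i+1}$ lies on $\SQ$, and transporting this quadrilateral back through the kinematic mapping returns the rotation quadrangle. This realises the first set of input data: three vertices and the edge carrying the fourth.

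\emph{Part (2).} Take $i=0$ again. Now $\alpha_0$ and $\alpha_2$ fix $A_0,A_2\in\SQ$, while $r_{01}$ and $r_{23}$ determine lines $g_1\ni A_0$ and $g_2\ni A_2$ on $\SQ$ carrying the missing vertices $A_1\in g_1$ and $A_3\in g_2$. Requiring $\tau_{12}$ and $\tau_{30}$ to be rotations means $A_1$ is conjugate to $A_2$ and $A_3$ is conjugate to $A_0$, so $A_1\in g_1\cap A_2^{\perp}$ and $A_3\in g_2\cap A_0^{\perp}$. By genericity $A_0$ and $A_2$ are non-conjugate, whence $g_1\not\subset A_2^{\perp}$ and $g_2\not\subset A_0^{\perp}$ and both intersections are single points; the two remaining conjugacies $A_0\sim A_1$ and $A_2\sim A_3$ are automatic because $g_1,g_2\subset\SQ$. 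Thus $Q$, and therefore the rotation quadrangle, is uniquely determined; this realises the second set of input data, two opposite vertices and two opposite edges.

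The displacements and angles absent from either input are finally read back off the constructed configuration --- the displacements by inverting the kinematic mapping, each missing angle as the parameter of the corresponding vertex along its rotation line; this is the ``easy completion of the missing data'' announced before the statement. The step I expect to demand the most care, and the only real obstacle, is verifying that the genericity hypothesis is genuinely in force: that the intersections with the polar hyperplanes are single points rather than whole lines, that the four constructed vertices are pairwise distinct with connecting lines spanning a three-space that meets $\SQ$ in a hyperboloid $H$ and is not itself contained in $\SQ$, and --- a purely notational point --- that one has fixed once and for all the frame in which the relative axes $r_{i,i+1}$ are prescribed and the half-angle parametrisation of the rotation lines, with its $2\pi$-periodicity and its half-turn point at infinity. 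Each of these is exactly what is suspended in the classical special case.
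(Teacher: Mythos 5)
Your proposal is correct and follows essentially the same route as the paper: it passes through Study's kinematic mapping to a skew quadrilateral on $\SQ$ determined by three vertices plus the line carrying the fourth (case 1) or by two opposite vertices and two opposite edges (case 2). The only cosmetic difference is that you pin down the missing vertices via polar hyperplanes $A^{\perp}$, where the paper uses the hyperboloid $H$ cut out of $\SQ$ by the three-space spanned by the data — two equivalent mechanisms resting on the same genericity assumption, whose role you correctly identify as the delicate point.
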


Computing the actual rotation angles and line coordinates of the
revolute axes from the quadrilateral $Q$ is straightforward. The
necessary formulas are, for example, found in
\cite[Satz~13]{weiss35:_liniengeometrie_und_kinematik}. Beware that
\cite{weiss35:_liniengeometrie_und_kinematik} uses conventions for the
Study parameters that slightly differ from modern authors, for example
\cite{husty09:_algebraic_geometry_kinematics} or
\cite{selig05:_fundamentals_robotics}. Therefore minor adaptions to
the formulas may be necessary.

\section{Homologous points on a circle}
\label{sec:homologous-points}

Now we turn to the study of the locus of points in the moving space
whose homologous images lie on a circle
(Figure~\ref{fig:homologous-points-planes-lines}, left). As usual we
consider straight lines as special circles of infinite radius. If the
four relative displacements are general screws, the sought locus is
known to be an algebraic curve of degree six
\cite[p.~128]{bottema90:_theoretical_kinematics}. In case of rotation
quadrilaterals this is still true but the curve is highly reducible:

\begin{theorem}
  \label{th:homologous-points}
  The locus of points in the moving space whose homologous images lie
  on a circle consists of the four relative revolute axis $r_{01}$,
  $r_{12}$, $r_{23}$, $r_{30}$ and their two transversals $u$ and~$v$.
\end{theorem}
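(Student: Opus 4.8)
The plan is to reduce the concyclicity condition to a determinantal one via the standard lifting $\iota\colon\RSet^3\to\RSet^4$, $\vec{x}\mapsto(\vec{x},\Norm{\vec{x}}^2)$. Four points $\vec{x}_0,\dots,\vec{x}_3$ lie on a common circle or line precisely when their lifts span an affine subspace of dimension at most two, that is, when the three vectors $\iota(\vec{x}_j)-\iota(\vec{x}_0)$ ($j=1,2,3$) are linearly dependent. Indeed, spheres and planes lift to affine hyperplanes of $\RSet^4$, a circle or line is an intersection of two such hyperplanes (hence a two-flat), and conversely the $\iota$-preimage of any two-flat is contained in a circle or a line. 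Now write $\alpha_i(X)=\mat{R}_iX+\vec{c}_i$ with $\mat{R}_i$ orthogonal. The degree-two part of $\Norm{\alpha_i(X)}^2$ is $\Norm{X}^2$ for every $i$, so both $\Norm{\alpha_j(X)}^2-\Norm{\alpha_0(X)}^2$ and $\alpha_j(X)-\alpha_0(X)$ are affine-linear in $X$. Hence the $4\times 3$ matrix $\mat{M}(X)$ whose columns are $\iota(\alpha_j(X))-\iota(\alpha_0(X))$, $j=1,2,3$, has affine-linear entries, and the sought locus is exactly $\{X:\rank\mat{M}(X)\le 2\}$.

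Next I would check that the six lines lie in this locus. If $X$ is on a relative revolute axis $r_{i,i+1}$, then $\alpha_i(X)=\alpha_{i+1}(X)$ because the rotation $\tau_{i,i+1}$ fixes its axis pointwise; hence at most three of the four homologous images are distinct and they are trivially concyclic. For a common transversal $u$ of the four axes, put $P_{i,i+1}=u\cap r_{i,i+1}$ and parametrise $u$ affinely as $X(t)$. Each $\alpha_i$ maps $u$ isometrically onto a line, so $\alpha_i(X(t))=\vec{p}_i+t\,\vec{q}_i$ with $\Norm{\vec{q}_i}$ independent of $i$; therefore the $t^2$-terms of $\iota(\alpha_j(X(t)))-\iota(\alpha_0(X(t)))$ cancel, these vectors are affine-linear in $t$, and every $3\times 3$ minor of $\mat{M}(X(t))$ is a polynomial of degree at most three in $t$. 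By the first paragraph each such minor vanishes at the four parameters corresponding to $P_{01},P_{12},P_{23},P_{30}$; as these are four distinct values, the minors vanish identically, so $u$ — and likewise $v$ — belongs to the locus.

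Finally I would show there is nothing else. Homogenising, $\mat{M}$ becomes a matrix of linear forms on the projective closure of the moving space, and the locus is the determinantal variety where its rank drops to two. For a generic rotation quadrangle this variety has the expected dimension one and degree six, recovering the classical count of \cite[p.~128]{bottema90:_theoretical_kinematics}; the six lines found above are then pairwise distinct. A curve of degree six that contains six distinct lines is their union, so the locus equals $r_{01}\cup r_{12}\cup r_{23}\cup r_{30}\cup u\cup v$.

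The step I expect to be most delicate is the genericity bookkeeping in the last paragraph: one must ensure that the rank-drop locus really is a curve of degree six for the rotation quadrangles under consideration (so that it cannot properly contain the six lines) and that the six lines are genuinely distinct — for instance that $u$ does not meet two of the axes in the same point and that $u\ne v$. All of this follows from the standing assumption that $Q$ spans a three-space not contained in $\SQ$, possibly together with further mild non-degeneracy conditions; pinning down exactly which special quadrangles have to be excluded is the only real subtlety.
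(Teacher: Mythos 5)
Your proposal is correct and follows the same overall strategy as the paper---a degree-six upper bound for the locus combined with a direct verification that the four axes and the two transversals belong to it---but both halves are executed by genuinely different means. For the verification, the paper's Lemma~\ref{lem:coplanar-points} argues in two steps: coplanarity of the homologous images along a line is a cubic in the line parameter, hence vanishes identically once it has four zeros, and, given coplanarity, concyclicity reduces via the linear relation \eqref{eq:6} to a single quadratic needing only three zeros. You instead lift to the paraboloid in $\RSet^4$ and encode concyclicity as $\rank\mat{M}(X)\le 2$ for a $4\times 3$ matrix with affine-linear entries; along a transversal each of the four $3\times 3$ minors is a cubic vanishing at the four intersection points with the axes, which disposes of coplanarity and circularity in one stroke and, as a bonus, shows the locus is purely one-dimensional (a determinantal locus of expected codimension has no embedded or isolated components). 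For the upper bound the paper simply cites the degree-six result of Bottema and Roth, whereas you re-derive it from the Thom--Porteous degree formula for the rank-two locus of a $4\times 3$ matrix of linear forms; the price is the genericity bookkeeping you flag yourself (the degeneracy locus must have the expected dimension, the six lines must be distinct, and the four intersection points on each transversal must be distinct), which is precisely the non-degeneracy the paper absorbs into its standing genericity assumption on the quadrilateral $Q$. Both routes are sound; yours is more self-contained and uniform, the paper's is shorter because it leans on the classical count.
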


\begin{proof}
  By the general result of \cite{bottema90:_theoretical_kinematics}
  the sought locus can consist of six lines at most. We show that all
  points on the six lines mentioned in the theorem have indeed
  homologous images on circles. For points $X \in r_{i,i+1}$ this is
  trivially true since we have $X_i = X_{i+1}$. Since any transversal
  carries four points of this type, Lemma~\ref{lem:coplanar-points}
  below, implies the same property for all points on $u$ and~$v$. (At
  least the first part of this lemma is well-known. An old reference
  is \cite[p.~187]{schoenflies86:_geometetrie_der_bewegung}.)
\end{proof}

\begin{lemma}
  \label{lem:coplanar-points}
  Consider a straight line $\ell$ in the moving space and four affine
  displacements. If $\ell$ contains four points whose homologous
  images are co-planar than this is the case for all points of
  $\ell$. If, in addition, the homologous images of three points lie
  on circle, this is the case for all points of~$\ell$.
\end{lemma}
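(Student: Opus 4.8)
The plan is to parametrise $\ell$ affinely by $X(t)=P+t\vec d$, $t\in\RSet$, and to use that the homologous images $X_i(t)=\alpha_i(P)+t\,A_i\vec d$ (with $A_i$ the linear part of $\alpha_i$) are again affinely parametrised \emph{with the same parameter $t$} along the image lines $\ell_i$. For the first claim, coplanarity of $X_0(t),\dots,X_3(t)$ is the vanishing of the determinant $\Delta(t)=\det\bigl(1,\ X_i(t)^{\tp}\bigr)_{i=0}^{3}$. Every entry is affine in $t$, so $\Delta$ is a polynomial of degree at most four; but its $t^{4}$-coefficient is the determinant of the matrix with rows $(0,\ (A_i\vec d)^{\tp})$, which has a zero column. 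Hence $\deg\Delta\le 3$, and vanishing of $\Delta$ at four points of $\ell$ forces $\Delta\equiv 0$ — the point at infinity of $\ell$ is harmless, its homologous images lying in the plane at infinity. This settles the first statement.

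For the second statement I would pass to the standard lift $X\mapsto\hat X=(X,\Norm{X}^{2})\in\RSet^{4}$, under which spheres (and planes, as degenerate spheres) correspond to hyperplanes; four points are concyclic or collinear precisely when their lifts span an affine subspace of dimension at most two. The decisive point is that, the $\alpha_i$ being Euclidean, each lift $\hat\alpha_i$ is an \emph{affine} map of $\RSet^{4}$ whose linear part fixes the vertical direction. Therefore the quadratic-in-$t$ terms of $\hat X_i(t)-\hat X_0(t)$, which would come from $\Norm{X_i(t)}^{2}$, cancel, and the condition ``$\hat X_0(t),\dots,\hat X_3(t)$ span at most a $2$-plane'' reduces to the simultaneous vanishing of the $3\times3$ minors of a $3\times4$ matrix with entries affine in $t$ — hence of polynomials of degree at most three. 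One of these minors is the determinant $\Delta(t)$ of the first part, already known to vanish identically; feeding that identity back in, the leading coefficients of the remaining minors should turn out to be proportional to a single scalar built from the directions $A_i\vec d$ and the translational parts and to vanish as well, depressing the degree to at most two. Vanishing at the three prescribed points then forces all these minors to vanish identically, i.e.\ every point of $\ell$ has concyclic homologous images.

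The step I expect to be the real obstacle is exactly this last degree count: verifying that the coplanarity established in the first part genuinely lowers the degree of the concyclicity determinant from three to two. Concretely this means exploiting the Plücker-type dependency among the $3\times3$ minors together with the linear dependence of $A_0\vec d,\dots,A_3\vec d$ that coplanarity imposes; the rest is routine. For the first half one could instead argue synthetically with the reguli carrying the matched points of pairs of image lines (the circle of ideas of \cite{schoenflies86:_geometetrie_der_bewegung}), but the determinantal argument is shorter and adapts directly to the refinement needed in the second half.
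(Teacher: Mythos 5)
Your first paragraph is sound and is essentially the paper's own argument for the first assertion: parametrise the four image lines with a common affine parameter $t$, observe that the coplanarity determinant has degree at most three in $t$ (the paper reads this off \eqref{eq:4} directly; your cancellation of the $t^4$-coefficient is the same computation in a slightly different normalisation), and conclude from the existence of four zeros. Nothing to object to there.

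The second half, however, is where the actual content of the lemma lies, and you stop exactly at the decisive step: you write that the leading coefficients of the remaining minors ``should turn out'' to vanish, and you yourself flag this degree count as ``the real obstacle''. That is a genuine gap, and chasing all $3\times 3$ minors of the lifted configuration is the hard way to close it. The paper's device is to encode concyclicity not through minors of the paraboloid lift but through the three bisector planes $\beta_{01}$, $\beta_{12}$, $\beta_{23}$: the four points lie on a circle or a line iff the $3\times 4$ matrix whose rows are $\bigl(\vec{x}_i^\tp\cdot\vec{x}_i+\vec{x}_{i+1}^\tp\cdot\vec{x}_{i+1},\ \vec{x}_i-\vec{x}_{i+1}\bigr)$ has rank at most two, see \eqref{eq:5}. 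Part one supplies a linear relation $\lambda(\vec{x}_0-\vec{x}_1)+\mu(\vec{x}_1-\vec{x}_2)+\nu(\vec{x}_2-\vec{x}_3)=0$ among the rows of the right-hand $3\times3$ block; contracting the first column with the same $(\lambda,\mu,\nu)$ collapses the rank condition to the single scalar equation \eqref{eq:7}, which is quadratic in $t$, so three zeros force it to vanish identically. This contraction is precisely the ``Pl\"ucker-type dependency among the minors'' you allude to, made concrete, and it is the one step your sketch is missing. One caveat you would need to address if you finish the proof along these lines (the paper is silent on it as well): the reduction to a quadratic presupposes that $\lambda$, $\mu$, $\nu$ can be chosen independently of $t$, which does not follow from the identical vanishing of the coplanarity determinant alone and must be verified, or the degree count redone, in the configuration at hand.
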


\begin{proof}
  We can parametrize the homologous images of $\ell$ as
  \begin{equation}
    \label{eq:3}
    \ell_i\colon \vec{x}_i(t) = \vec{a}_i + t \vec{d}_i;\quad
    \vec{a}_i,\ \vec{d}_i \in \RSet^3;\quad i = 0,1,2,3
  \end{equation}
  such that points of the same parameter value $t$ are homologous.
  The co-planarity condition reads
  \begin{equation}
    \label{eq:4}
    \det
    \begin{pmatrix}
      1         & 1         & 1         & 1 \\
      \vec{x}_0 & \vec{x}_1 & \vec{x}_2 & \vec{x}_3
    \end{pmatrix} = 0
  \end{equation}
  (we omit the argument $t$ for sake of readability). Since it is at
  most of degree three in $t$, it vanishes identically if four zeros
  exist. This proves the first assertion of the lemma.

  Four homologous points $X_0$, $X_1$, $X_2$, $X_3$ lie on a circle
  (or a straight line), if three independent bisector planes
  $\beta_{ij}$ of $X_i$ and $X_j$ have a line, possibly at infinity,
  in common.  This is equivalent to
  \begin{equation}
    \label{eq:5}
    \rank
    \begin{pmatrix}
      \vec{x}_0^\tp \cdot \vec{x}_0 + \vec{x}_1^\tp \cdot \vec{x}_1 & \vec{x}_0 - \vec{x}_1 \\
      \vec{x}_1^\tp \cdot \vec{x}_1 + \vec{x}_2^\tp \cdot \vec{x}_2 & \vec{x}_1 - \vec{x}_2 \\
      \vec{x}_2^\tp \cdot \vec{x}_2 + \vec{x}_3^\tp \cdot \vec{x}_3 & \vec{x}_2 - \vec{x}_3
    \end{pmatrix} \le 2.
  \end{equation}
  Corresponding points are assumed to be coplanar. Therefore there
  exist scalars $\lambda$, $\mu$ and $\nu$, not all of them zero, such
  that
  \begin{equation}
    \label{eq:6}
    \lambda (\vec{x}_0 - \vec{x}_1) +
    \mu     (\vec{x}_1 - \vec{x}_2) +
    \nu     (\vec{x}_2 - \vec{x}_3) = 0.
  \end{equation}
  Hence, the circularity condition becomes
  \begin{equation}
    \label{eq:7}
    \lambda (\vec{x}_0^\tp \cdot \vec{x}_0 + \vec{x}_1^\tp \cdot \vec{x}_1) +
    \mu     (\vec{x}_1^\tp \cdot \vec{x}_1 + \vec{x}_2^\tp \cdot \vec{x}_2) +
    \nu     (\vec{x}_2^\tp \cdot \vec{x}_2 + \vec{x}_3^\tp \cdot \vec{x}_3) = 0.
  \end{equation}
  It is at most quadratic in $t$. Therefore, existence of three zeros
  implies its vanishing and the second assertion of the lemma is
  proved.
\end{proof}

The set of circles through quadruples of homologous points originating
from a relative revolute axis $r_{i,i+1}$ is not very interesting in
the context of this article. Of more relevance is the set of circles
generated by points of the transversal lines $u$ and~$v$.

\begin{theorem}
  \label{th:hyperboloid}
  The circles through homologous images of points of a transversal
  line ($u$ or $v$) lie on a hyperboloid of revolution.
\end{theorem}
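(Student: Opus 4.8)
The plan is to show that every circle in the family has the same axis; the union of the circles is then a surface of revolution, and a short elimination identifies it as a quadric. Let $\vec{x}_i(t) = \vec{a}_i + t\,\vec{d}_i$ ($t \in \RSet\cup\{\infty\}$, $i=0,1,2,3$) be the homologous images of the point running through the transversal $u$, parametrised so that points of equal parameter value are homologous, as in the proof of Lemma~\ref{lem:coplanar-points}; here $\vec{d}_i$ is the image of the direction of $u$ under the rotational part of $\alpha_i$. By Theorem~\ref{th:homologous-points} the four points $\vec{x}_0(t),\dots,\vec{x}_3(t)$ lie, for every $t$, on a circle $c(t)$; write $n(t)$ for its axis, that is, the line through the centre of $c(t)$ orthogonal to the plane of $c(t)$.

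The first and main step is to prove that $n(t)$ does not depend on $t$. Every point of $n(t)$ is equidistant from all points of $c(t)$, so the perpendicular bisector plane $\beta_{i,i+1}(t)$ of $\vec{x}_i(t)$ and $\vec{x}_{i+1}(t)$ contains $n(t)$. On the other hand $\vec{x}_{i+1}(t) = \tau_{i,i+1}\bigl(\vec{x}_i(t)\bigr)$, and $\tau_{i,i+1}$ is a pure rotation; since a rotation is an isometry that fixes its axis pointwise, every point of the axis of $\tau_{i,i+1}$ is equidistant from $\vec{x}_i(t)$ and $\vec{x}_{i+1}(t)$, so this axis, too, lies in $\beta_{i,i+1}(t)$. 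Hence $n(t)$ and the (fixed, $t$-independent) axis of $\tau_{i,i+1}$ are two lines in the common plane $\beta_{i,i+1}(t)$ and therefore meet, possibly at infinity. Consequently $n(t)$ is a common transversal of the four fixed lines which are the axes of $\tau_{01}$, $\tau_{12}$, $\tau_{23}$, $\tau_{30}$. Under the standing genericity assumption these four lines admit exactly two common transversals; since $n(t)$ depends rationally on $t$ and takes values in this two-element set, it is constant. Denote the resulting fixed line by $\bar{u}$. (The circles arising from the second transversal $v$ have, by the same argument, the second common transversal $\bar{v}$ as their common axis.)

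It remains to identify the surface $\Phi \defeq \bigcup_t c(t)$. All circles $c(t)$ have $\bar{u}$ as their axis, so their centres lie on $\bar{u}$ and their planes are orthogonal to $\bar{u}$; this exhibits $\Phi$ as a surface of revolution about $\bar{u}$. Choose Cartesian coordinates with $\bar{u}$ as the $z$-axis. Then the plane of $c(t)$ is $z = (\vec{a}_i)_3 + t\,(\vec{d}_i)_3 =: \zeta(t)$, an affine function of $t$ (necessarily the same for every $i$), and the squared radius of $c(t)$ equals the squared distance of $\vec{x}_i(t)$ from the $z$-axis, namely $\bigl((\vec{a}_i)_1 + t\,(\vec{d}_i)_1\bigr)^2 + \bigl((\vec{a}_i)_2 + t\,(\vec{d}_i)_2\bigr)^2$, a polynomial of degree at most two in $t$ with leading coefficient $(\vec{d}_i)_1^2 + (\vec{d}_i)_2^2$. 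Eliminating $t$ (legitimate as soon as $\zeta$ is non-constant) gives an equation of the form $x^2 + y^2 = A z^2 + B z + C$, so $\Phi$ is a quadric of revolution about $\bar{u}$. Generically the leading coefficient is positive, the form $x^2 + y^2 - A z^2$ has signature $(2,1)$, and $\Phi$ is a one-sheeted hyperboloid of revolution; in special positions it may degenerate into a cone, a cylinder, a paraboloid, or a two-sheeted hyperboloid of revolution.

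The step I expect to be the real obstacle is the constancy of $n(t)$, together with the genericity bookkeeping it rests on: one must ensure that the four axes of the relative rotations are in general enough position to have precisely two common transversals (rather than a whole regulus or none), that for generic $t$ the bisector planes $\beta_{01}(t)$, $\beta_{12}(t)$, $\beta_{23}(t)$ really intersect in a line, and that the directions $\vec{d}_i$ are not parallel to $\bar{u}$, so that $\zeta$ is non-constant and $A \neq 0$. All of this should be a consequence of the hypothesis that $Q$ spans a three-space not contained in $\SQ$, but it needs to be checked.
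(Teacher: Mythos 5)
Your proof is correct (up to the genericity caveats you flag yourself), but it follows a genuinely different route from the paper. The paper never considers the circles' common axis: it observes that the homologous images $u_0$, $u_1$, $u_2$, $u_3$ of the transversal form a skew quadrilateral with vertices $U_{i,i+1}$ on the fixed revolute axes, shows by a short distance computation with the congruent point ranges cut out by the circles that opposite sides of this quadrilateral have equal sums of lengths ($a+c=b+d$), and then invokes the classical criterion that this characterizes skew quadrilaterals lying on a hyperboloid of revolution. You instead prove coaxiality of all the circles directly --- the axis $n(t)$ lies in every bisector plane $\beta_{i,i+1}(t)$, as does the fixed axis of $\tau_{i,i+1}$, so $n(t)$ is a common transversal of the four fixed rotation axes and hence constant --- and then obtain the quadric of revolution by elimination. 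What the paper's route buys is brevity and weaker genericity demands: it never needs the four fixed axes $\alpha_i(r_{i,i+1})$ to have only finitely many common transversals, which is the one hypothesis your continuity argument genuinely requires and which does not obviously follow from the Study-quadric assumption; you are right to single this out as the point to be checked. What your route buys is self-containedness (no appeal to the ``well-known criterion''), an explicit identification of the hyperboloid's axis as a common transversal of the four fixed axes $\alpha_i(r_{i,i+1})$ --- a byproduct the paper does not state --- and an explicit equation of the surface. Two small refinements: ``exactly two'' transversals is more than you need, since ``finitely many'' already forces $n(t)$ to be constant; and the paraboloid and the two-sheeted hyperboloid cannot actually occur among your listed degenerations, because the surface contains the four real lines $u_i$ and must therefore be a ruled quadric of revolution.
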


\begin{figure}
  \centering
  \includegraphics{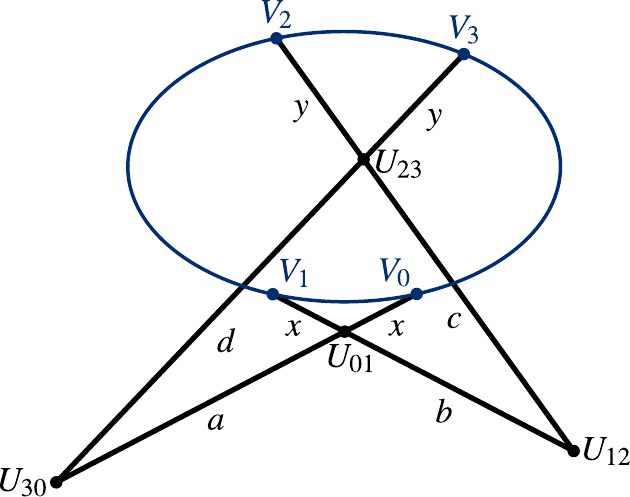}
  \caption{Homologous images of the transversal line $u$}
  \label{fig:trajectory-circles}
\end{figure}

\begin{proof}
  We consider only points on the transversal $u$.  Any two of its
  successive homologous images $u_i$, $u_{i+1}$ intersect in a point
  $U_{i,i+1}$, the intersection point of $\alpha_i(r_{i,i+1})$ and
  $\alpha_{i+1}(r_{i,i+1})$. Therefore the lines $u_0$, $u_1$, $u_2$,
  $u_3$ are the edges of a skew quadrilateral and lie on a pencil of
  hyperboloids. The intersection points with circles through
  homologous points form congruent ranges of points on the lines
  $u_0$, $u_1$, $u_2$, $u_3$ and the intersection points $U_{i,i+1}$
  correspond in this relation. Consider now a circle $C$ such that
  every intersections point $V_i$ with $u_i$ lies outside the segment
  $[U_{i-1,i},U_{i,i+1}]$ for $i \in \{0,1,2,3\}$. Setting
  \begin{equation}
    \label{eq:8}
    \begin{gathered}
      a \defeq \dist (U_{30},U_{01}),\quad
      b \defeq \dist (U_{01},U_{12}),\quad
      x \defeq \dist (U_{01}, V_0) = \dist (U_{01}, V_1)\\
      c \defeq \dist (U_{12},U_{23}),\quad
      d \defeq \dist (U_{30},U_{23}),\quad
      y \defeq \dist (U_{23}, V_2) = \dist (U_{23}, V_2)
    \end{gathered}
  \end{equation}
  (Figure~\ref{fig:trajectory-circles}) we see that $b + x = c + y$
  and $a + x = d + y$. This implies $a+c = b+d$. In other words, both
  pairs of opposite edges in the skew quadrilateral $(U_{01}, U_{12},
  U_{23}, U_{30})$ have the same sum of lengths. This is a well-known
  criterion for the existence of a hyperboloid of revolution through
  $u_0$, $u_1$, $u_2$, and~$u_3$.
\end{proof}

\section{Homologous planes tangent to a cone of revolution}
\label{sec:homologous-planes}

In this section we study the locus of planes $\varepsilon$ in the
moving space such that its homologous images $\varepsilon_0$,
$\varepsilon_1$, $\varepsilon_2$, $\varepsilon_3$ are tangent contact
to a cone of revolution
(Figure~\ref{fig:homologous-points-planes-lines}, center). Actually we
will not discuss this in full generality but restrict ourselves to the
case of \emph{oriented} planes in \emph{oriented} contact with a cone
of revolution. This means the cone axis is the intersection line of
the \emph{unique} bisector planes of $\varepsilon_i$ and
$\varepsilon_{i+1}$. Having in mind applications in discrete
differential geometry (a kinematic generation of conical nets, see
\cite{pottmann08:_focal_circular_conical}) this additional assumption
is justified.

Since the offset surface of a cone of revolution is again a cone of
revolution, every plane parallel to a solution plane $\varepsilon$ is
a solution plane as well. We infer that the sought locus of planes
consists of \emph{pencils of parallel planes}. This can also be
confirmed by direct computation. Denote by $\vec{e} =
(e_0,e_1,e_2,e_3)^\tp$ the plane coordinate vector of
$\varepsilon$. If $\mat{A}_i$ is the homogeneous transformation matrix
describing $\alpha_i$, the coordinate vectors $\vec{e}_i$ of the
homologous planes are found from
\begin{equation}
  \label{eq:10}
  \vec{e}_i^\tp = \vec{e}^{\tp} \cdot \mat{A}_i^{-1},
  \quad
  i \in \{0,1,2,3\}.
\end{equation}
The condition that they have a point in common reads
\begin{equation}
  \label{eq:11}
  E(e_0,e_1,e_2,e_3) = \det(\vec{e}_0, \vec{e}_1, \vec{e}_2, \vec{e}_3) = 0
\end{equation}
and is of degree four in $e_0$, $e_1$, $e_2$, and $e_3$. Assuming that
\eqref{eq:11} holds, the condition on the conical position can be
stated in terms of the oriented normal vectors $\vec{n}_i$ which are
obtained from $\vec{e}_i$ by dropping the first (homogenizing)
coordinate. The planes are tangent to a cone of revolution if the four
points with coordinate vectors $\vec{n}_i$ lie on a circle (note that
these vectors have the same length). This condition results in a
homogeneous cubic equation $G(e_1,e_2,e_3) = 0$ which is nothing but
the circle-cone condition known from four-position synthesis of
spherical four-bar linkages (see
\cite[p.~179]{mccarthy00:_geometric_design_linkages}).

A closer inspection shows that there exists a homogeneous polynomial
$F(e_1,e_2,e_3)$ of degree four such that \eqref{eq:11} becomes
\begin{equation}
  \label{eq:12}
  E(e_0,e_1,e_2,e_3) = F(e_1,e_2,e_3) + e_0\;G(e_1,e_2,e_3) = 0.
\end{equation}
Hence, solution planes are characterized by the simultaneous vanishing
of $F$ and $G$. Since these equations are independent of $e_0$, the
value of the homogenizing coordinate can be chosen arbitrarily. This
is the algebraic manifestation of our observation that the solution
consists of pencils of parallel planes.

Since the two homogeneous equations $F$ and $G$ are of respective
degrees four and three there exist at most twelve pencils of parallel
planes whose homologous images are tangent to a cone of
revolution. But not all of them are valid: The circle-cone equation
$G$ also identifies vectors $\vec{n}_0$, $\vec{n}_1$, $\vec{n}_2$,
$\vec{n}_3$ that span a two-dimensional sub-space. The corresponding
planes $\varepsilon_0$, $\varepsilon_1$, $\varepsilon_2$,
$\varepsilon_3$ are all parallel to a fixed direction but do not
qualify as solution planes and have to be singled out. The complete
description of all solution planes is given in

\begin{theorem}
  \label{th:homologous-planes}
  The locus of planes in the moving space whose homologous images are
  tangent to a cone of revolution consists of six pencils of parallel
  planes:
  \begin{enumerate}
  \item The planes orthogonal to one of the four revolute axes
    $r_{01}$, $r_{12}$, $r_{23}$, $r_{30}$ and
  \item the planes orthogonal to one of the two transversals $u$ or
    $v$ of these axes.
  \end{enumerate}
\end{theorem}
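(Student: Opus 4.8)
Following the pattern of the proof of Theorem~\ref{th:homologous-points}, the plan has two parts: first exhibit the six pencils of the statement as genuine solutions, then show that the equations $F$ and $G$ of \eqref{eq:12} admit no further solutions. By Bézout's theorem the cubic $G$ and the quartic $F$ have at most twelve common zeros $(e_1,e_2,e_3)^\tp$ in the projective plane of normal directions, and a solution pencil corresponds to such a zero at which the homologous normal vectors $\vec{n}_0,\dots,\vec{n}_3$ do \emph{not} span merely a plane through the origin; so ``at most twelve'' has to be brought down to ``exactly six''.

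To obtain the pencils orthogonal to the revolute axes, fix $\varepsilon \perp r_{i,i+1}$. Its image $\varepsilon_i = \alpha_i(\varepsilon)$ is orthogonal to $\alpha_i(r_{i,i+1})$, which is precisely the fixed-space axis of the rotation $\tau_{i,i+1}$; a rotation maps every oriented plane perpendicular to its axis onto itself, so $\varepsilon_{i+1} = \tau_{i,i+1}(\varepsilon_i) = \varepsilon_i$. Hence only three distinct planes occur among $\varepsilon_0,\dots,\varepsilon_3$, and three planes through a common point are tangent to a cone of revolution: their equally long oriented normal vectors are three points on a sphere and therefore lie on a circle. Thus the whole pencil orthogonal to $r_{i,i+1}$ solves the problem, for each $i$.

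For the pencils orthogonal to the transversals fix $\varepsilon \perp u$ (the line $v$ is treated identically). For a point $Q \in u$ put $Q_i = \alpha_i(Q)$; then $\varepsilon_i$ is the plane through $Q_i$ orthogonal to $u_i = \alpha_i(u)$. By Theorem~\ref{th:hyperboloid} the lines $u_0,\dots,u_3$ are generators of a hyperboloid of revolution $H$ on which all circles through homologous points of $u$ lie; in particular $Q_0,\dots,Q_3$ lie on a circle $c \subset H$, and a circle lying on a surface of revolution is necessarily one of its parallel circles. It remains to use the following metric fact about a hyperboloid of revolution $H$ with axis $a$: the planes orthogonal to the generators of $H$ at the points where a fixed parallel circle $c$ meets them all pass through one and the same point of $a$ (a short computation in the model $x^2 + y^2 - z^2/k^2 = 1$ confirms this, the two reguli of $H$ yielding the same point); and since all generators enclose the same angle with $a$, these planes do so as well, hence they are tangent to a cone of revolution whose axis is $a$ and whose apex lies on $a$. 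Applied to $c$ and the generators $u_0,\dots,u_3$ this shows that $\varepsilon_0,\dots,\varepsilon_3$ are tangent to a cone of revolution; the contact is oriented because all $\alpha_i$ preserve orientation. So the pencils orthogonal to $u$ and to $v$ solve the problem.

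The remaining, and genuinely laborious, step is to rule out further solutions: one must show that each of the (at most twelve minus six) residual common zeros of $F$ and $G$ is spurious -- namely a direction $\vec{n}$ for which $\vec{n}_0,\dots,\vec{n}_3$ are linearly dependent and span only a plane through the origin, so that the four homologous planes merely touch a cylinder of revolution -- or else that the six genuine directions already exhaust the intersection multiplicity twelve. Equivalently, one has to prove that a genuine solution plane is necessarily orthogonal to one of the six lines $r_{01},r_{12},r_{23},r_{30},u,v$. I expect this to be the main obstacle, and it seems best handled by an explicit inspection of the polynomials $F$ and $G$ in the spirit of the circle--cone condition from four-position synthesis of spherical four-bar linkages, rather than by a purely synthetic argument.
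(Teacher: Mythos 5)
Your proposal carries out only half of the proof. The positive half --- exhibiting the six pencils as genuine solutions --- is correct and in fact more detailed than the paper's own treatment: the paper dismisses the pencils orthogonal to $r_{i,i+1}$ with the one-line observation that $\varepsilon_i = \varepsilon_{i+1}$, and calls the pencils orthogonal to $u$ and $v$ an ``elementary consequence of Theorem~\ref{th:hyperboloid}''; your argument via the parallel circle of the hyperboloid of revolution and the common apex on its axis is a reasonable way to make that consequence explicit. But the completeness half, which you yourself flag as ``the main obstacle'' and defer to an unspecified ``explicit inspection of the polynomials $F$ and $G$'', is precisely the part that needs an idea, and the idea is not a brute-force computation.

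The paper closes the gap with Lemma~\ref{lem:spherical-coplanar}: for four spherical displacements there are \emph{in general exactly six} directions through the origin whose homologous images are coplanar. The count comes from applying B\'ezout to the two cubics $\det(\vec{x}_0,\vec{x}_1,\vec{x}_2) = \det(\vec{x}_0,\vec{x}_1,\vec{x}_3) = 0$ (nine common zeros) and discarding the three fixed directions of the relative rotation $\sigma_2\circ\sigma_1^{-1}$, which satisfy both equations without making all four images coplanar. Applied to the spherical parts of the $\alpha_i$, this says that exactly six of the twelve common zeros of $F$ and $G$ are the spurious ``coplanar normals'' directions (these do satisfy $F = 0$ as well, since rank-deficient normals force $E = 0$ and hence, with $G = 0$, also $F = 0$). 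Subtracting six spurious from twelve leaves room for at most six genuine pencils, which must therefore be the six you exhibited. Without this counting step --- or some substitute for it --- your argument establishes only that the six stated pencils are solutions, not that they are the \emph{only} ones, so the theorem as stated remains unproved. You correctly identified the mechanism by which spurious zeros arise, but you never counted them, and that count is the crux.
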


\begin{proof}
  We already know that there exist at most twelve parallel pencils of
  solution planes, obtained by solving the quartic equation
  $F(e_1,e_2,e_3) = 0$ and the cubic equation $G(e_1,e_2,e_3) = 0$. By
  Lemma~\ref{lem:spherical-coplanar} below (which actually has been
  proved in a more general context in
  \cite[pp.~131--132]{bottema90:_theoretical_kinematics}), six of
  these pencils of planes are spurious and have to be subtracted from
  the totality of all solution planes. Thus, we only have to show that
  the planes enumerated in the Theorem are really solutions. This is
  obvious in case of a plane $\varepsilon$ orthogonal to axis
  $r_{i,i+1}$ since $\varepsilon_i$ and $\varepsilon_{i+1}$ are
  identical. In case of a plane orthogonal to $u$ or $v$ it is an
  elementary consequence of Theorem~\ref{th:hyperboloid}.
\end{proof}

\begin{lemma}
  \label{lem:spherical-coplanar}
  Consider four spherical displacements $\sigma_0$, $\sigma_1$,
  $\sigma_2$, and $\sigma_3$. Then there exist in general six lines
  (possibly complex or coinciding) through the origin of the moving
  coordinate frame, whose homologous lines are co-planar.
\end{lemma}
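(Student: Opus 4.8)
The plan is to reduce the coplanarity condition to the intersection of two plane cubic curves and then apply Bézout's theorem, subtracting a fixed triple of spurious intersection points.

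First I would normalize the configuration. Replacing each $\sigma_i$ by $\sigma_0^{-1}\circ\sigma_i$ is again a spherical displacement and amounts to applying the fixed rotation $\sigma_0^{-1}$ to all homologous lines; since a rotation carries planes through the origin to planes through the origin, coplanarity of the homologous lines is unaffected. Hence I may assume $\sigma_0 = \mathrm{id}$. Writing $R_i \in \mathrm{SO}(3)$ for the rotation matrix of $\sigma_i$ (so $R_0 = I$), a line through the origin of the moving frame is a direction vector $\vec{x} \in \RSet^3\setminus\{\vec{0}\}$, unique up to scaling, and its homologous lines have directions $\vec{x}$, $R_1\vec{x}$, $R_2\vec{x}$, $R_3\vec{x}$. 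The four lines are co-planar if and only if these four vectors span a subspace of dimension at most two.

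Next I would convert this rank condition into equations. Away from the directions where $\vec{x}$ and $R_1\vec{x}$ are parallel, the span of $\vec{x}$ and $R_1\vec{x}$ is a plane, and the four vectors are coplanar exactly when both $R_2\vec{x}$ and $R_3\vec{x}$ lie in that plane, i.e.
\begin{equation*}
  f(\vec{x}) \defeq \det(\vec{x}, R_1\vec{x}, R_2\vec{x}) = 0
  \quad\text{and}\quad
  g(\vec{x}) \defeq \det(\vec{x}, R_1\vec{x}, R_3\vec{x}) = 0 .
\end{equation*}
Both $f$ and $g$ are ternary cubic forms, so $f = 0$ and $g = 0$ are cubic curves in the projective plane $\mathbb{P}^2$ of directions; for generic $\sigma_i$ they have no common component (this can be verified on a single explicit numerical example), so by Bézout's theorem they have nine common points, counted with multiplicity and over~$\mathbb{C}$.

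It remains to identify the spurious common zeros. The three eigendirections of $R_1$ — its rotation axis and the two isotropic directions of the orthogonal plane — satisfy $R_1\vec{x} = \lambda\vec{x}$, so the first two arguments of both determinants become proportional and $f$ and $g$ vanish automatically there; yet at such a direction two of the homologous lines coincide while the four are, in general, not coplanar, so these three directions are not solutions and must be discarded. For generic $\sigma_i$ they are simple (transverse) intersection points of $f = 0$ and $g = 0$, and the remaining $9 - 3 = 6$ common points give the asserted six lines with co-planar homologous images, possibly complex or coinciding. The one place where genericity is genuinely used — absence of a common component of $f=0$ and $g=0$, and transversality at the three eigendirections of $R_1$, so that the naive count $9-3=6$ is correct — is the main technical point, settled either by an explicit example or by the more detailed analysis in \cite[pp.~131--132]{bottema90:_theoretical_kinematics}.
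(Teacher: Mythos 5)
Your proof follows essentially the same route as the paper: the coplanarity condition is reduced to the two cubic determinantal equations $\det(\vec{x}_0,\vec{x}_1,\vec{x}_2)=\det(\vec{x}_0,\vec{x}_1,\vec{x}_3)=0$, Bézout gives nine common solutions, and the three fixed directions of the relative rotation shared by both determinants are discarded as spurious, leaving six. Your identification of the spurious directions as the eigendirections of the relative rotation between positions $0$ and $1$ is in fact the consistent choice for these two determinants (the paper attributes them to $\sigma_2\circ\sigma_1^{-1}$, which does not match the equations as written), and your added remarks on normalization, genericity and transversality only make the count more precise.
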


\begin{proof}
  The co-planarity condition applied to a vector $\vec{x}$ reads
  \begin{equation}
    \label{eq:13}
    \rank (\vec{x}_0, \vec{x}_1, \vec{x}_2, \vec{x}_3) \le 2.
  \end{equation}
  We compute classes of proportional solution vectors by solving the
  two cubic equations
  \begin{equation}
    \label{eq:14}
    \det (\vec{x}_0, \vec{x}_1, \vec{x}_2) =
    \det (\vec{x}_0, \vec{x}_1, \vec{x}_3) = 0.
  \end{equation}
  It has nine solutions. But the three fixed directions (one real, two
  conjugate imaginary) of the relative rotation $\sigma_2 \circ
  \sigma_1^{-1}$ are spurious so that only six valid solutions remain.
\end{proof}

\section{Homologous lines on a hyperboloid of revolution}
\label{sec:homologous-lines}

Finally, we also investigate the locus of lines $\ell$ in the moving
frame whose homologous images $\ell_0$, $\ell_1$, $\ell_2$, $\ell_3$
form a skew quadrilateral on a hyperboloid of revolution
(Figure~\ref{fig:homologous-points-planes-lines}, right). As usual, we
restrict ourselves to generic configurations. Moreover, we make an
additional assumption on the line's orientation: Setting $L_{i,i+1}
\defeq \ell_i \cap \ell_{i+1}$ (indices modulo four) we require that,
when walking around the edges of the skew quadrilateral $(L_{01},
L_{12}, L_{23}, L_{30})$ we follow the line's orientation only on
every second edge (Figure~\ref{fig:homologous-points-planes-lines},
right).

The manifold of lines in Euclidean three-space depends on four
parameters. The mentioned condition on the four homologous images
$\ell_0$, $\ell_1$, $\ell_2$, $\ell_3$ of a line $\ell$ imposes,
however, five conditions: four intersection conditions for the skew
quadrilateral and one further condition for the hyperboloid of
revolution. Therefore, we expect no solution for a general screw
quadrangle. On the other hand we already know
(Theorem~\ref{th:hyperboloid}) that for rotation quadrangles the
homologous images of two transversals $u$ and $v$ form the required
configuration. We will show that these are the only solutions.

Considering the skew quadrilateral condition only, it is obvious that
either
\begin{itemize}
\item $\ell_i$ and $\ell_{i+1}$ intersect $\alpha_i(r_{i,i+1})$,
  possibly at a point at infinity, or
\item $\ell_i$ and $\ell_{i+1}$ are orthogonal to
  $\alpha_i(r_{i,i+1})$.
\end{itemize}
Only the first condition is compatible with our requirement on the
orientation of the homologous lines. Therefore, it is necessary that
$\ell$ intersects all lines $r_{i,i+1}$ (indices modulo four). Hence
$\ell = u$ or $\ell = v$. Combining this result with
Theorem~\ref{th:hyperboloid} we arrive at

\begin{theorem}
  \label{th:homologous-lines}
  The two transversals $u$ and $v$ of the four relative revolute axes
  $r_{01}$, $r_{12}$, $r_{23}$, and $r_{30}$ are the only lines whose
  homologous images form a skew quadrilateral (with proper
  orientation) on a hyperboloid of revolution.
\end{theorem}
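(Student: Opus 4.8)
The plan is to prove the statement in two halves, matching its two implications. That $u$ and $v$ \emph{are} solutions is essentially a restatement of Theorem~\ref{th:hyperboloid}: its proof already records that two successive homologous images of $u$ meet in the point $U_{i,i+1}=\alpha_i(r_{i,i+1})\cap\alpha_{i+1}(r_{i,i+1})$, so $u_0,u_1,u_2,u_3$ are the edges of a genuine skew quadrilateral, and Theorem~\ref{th:hyperboloid} supplies a hyperboloid of revolution through all four of them. It then remains to note that the orientation is the prescribed one, which is clear because $u$ genuinely crosses each axis $r_{i,i+1}$, so along the quadrilateral $(U_{01},U_{12},U_{23},U_{30})$ the line's orientation is followed on alternate edges. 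The same argument applies verbatim to $v$.

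For the converse I would start from a line $\ell$ whose homologous images $\ell_0,\dots,\ell_3$ form a skew quadrilateral with the prescribed orientation lying on a hyperboloid of revolution, and deduce $\ell\in\{u,v\}$. First I would use only that consecutive images are coplanar: since $\ell_{i+1}=\tau_{i,i+1}(\ell_i)$ and $\tau_{i,i+1}$ is a rotation about the fixed-space axis $a_{i,i+1}\defeq\alpha_i(r_{i,i+1})=\alpha_{i+1}(r_{i,i+1})$, I would invoke the elementary fact that a line and its image under a nontrivial rotation are coplanar if and only if the line meets the rotation axis (possibly at infinity) or is orthogonal to it; the quickest proof of this fact tracks the coordinate along the axis direction, which is preserved by the rotation. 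Transporting the alternative back by $\alpha_i^{-1}$ yields, for each $i$, that $\ell$ either meets $r_{i,i+1}$ or is orthogonal to it.

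The heart of the matter is eliminating the orthogonal branch by means of the orientation hypothesis, and this is where I expect the real work to be. I would fix a direction vector $\vec{t}$ of $\ell$, put $\vec{t}_i\defeq\alpha_i(\vec{t})$ so that $\vec{t}_{i+1}$ is $\vec{t}_i$ rotated about the direction of $a_{i,i+1}$ by the relative rotation angle, and then, at each vertex $L_{i,i+1}$, compare $\vec{t}_i$ and $\vec{t}_{i+1}$ with the two oriented polygon edges issuing from that vertex. A vertex of ``meets-the-axis'' type, where $L_{i,i+1}$ lies on $a_{i,i+1}$ and both edges pivot about that point of the axis, forces $\ell$'s orientation to be followed with opposite sense on its two incident edges, whereas a vertex of the orthogonal type, where $\ell_i$ and $\ell_{i+1}$ lie in one plane perpendicular to $a_{i,i+1}$ and are interchanged by a rotation of that plane, is incompatible with that alternation. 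Since the hypothesis is that $\ell$'s orientation is followed on exactly every second edge---that is, the sense alternates at all four vertices---every vertex must be of meets-the-axis type, so $\ell$ meets all four revolute axes $r_{01},r_{12},r_{23},r_{30}$. Under the standing genericity assumption these four mutually skew lines have precisely two common transversals, namely $u$ and $v$ by definition, hence $\ell\in\{u,v\}$, and together with the first half this proves the theorem.

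The delicate point, to repeat, is the orientation bookkeeping in the third paragraph: keeping track, at each vertex, of which ray of $\ell_i$ and which ray of $\ell_{i+1}$ bound the polygon edges, and reading off the with/against pattern from the position of $L_{i,i+1}$ relative to $a_{i,i+1}$. I would keep this manageable by observing that the two branches are separated by a single scalar condition---whether $\vec{t}_i$ is orthogonal to the axis direction of $a_{i,i+1}$, the axis component of $\vec{t}_i$ being invariant under $\tau_{i,i+1}$---and by using that the product of the four relative rotations is the identity to propagate the local conclusion consistently around the quadrilateral.
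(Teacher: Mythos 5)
Your proposal follows the paper's argument exactly: sufficiency for $u$ and $v$ is drawn from Theorem~\ref{th:hyperboloid}, and necessity from the dichotomy that $\ell_i$ and $\ell_{i+1}$ can be coplanar only if they meet $\alpha_i(r_{i,i+1})$ (possibly at infinity) or are orthogonal to it, with the orientation hypothesis excluding the orthogonal branch so that $\ell$ must meet all four axes and hence equal $u$ or $v$. You spell out the orientation bookkeeping at the vertices in more detail than the paper, which simply asserts that only the intersection case is compatible with the prescribed orientation.
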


\section{Conclusion and future research}
\label{sec:conclusion}

We investigated geometric structures related to four displacements
$\alpha_i$ ($i \in \{0,1,2,3\}$) such that the relative displacements
$\tau_{i,i+1} = \alpha_{i+1} \circ \alpha_i^{-1}$ are pure
rotations. In particular, we completely characterized the points,
planes and lines whose homologous images lie on a circle, are tangent
to a cone of revolution or form a skew quadrilateral on a hyperboloid
of revolution (with proper orientation). In a next step we plan to
study quadrilateral nets of proper Euclidean displacements such that
neighbouring positions correspond in a pure rotation. Using the
results of this article it is possible to show that independent pairs
of principal contact element nets
\cite{bobenko07:organizing_principles} can occur as trajectories of
such nets. Since mechanically generated motions are often composed of
pure rotations the theory might also be useful in more applied
settings.

%%% Local Variables: 
%%% mode: latex
%%% eval: (ispell-change-dictionary "english")
%%% eval: (flyspell-mode t)
%%% TeX-master: "../rotquad"
%%% End: 

% flatex input end: [rotquad/body.tex]

%
% flatex input: [rotquad/bibliography.tex]

%*flatex input: [rotquad-arxiv.bbl]

% flatex input end: [rotquad-arxiv.bbl]
%FLATEX-REM:\bibliographystyle{plainnat}
%FLATEX-REM:\bibliography{mrabbrev,ddkinb}

%%% Local Variables: 
%%% mode: latex
%%% eval: (ispell-change-dictionary "english")
%%% eval: (flyspell-mode t)
%%% TeX-master: "../rotquad"
%%% End: 

% flatex input end: [rotquad/bibliography.tex]

%
\end{document}